\newtheorem{definition}{Definition}
\newtheorem{thm}{Theorem}
\newtheorem{prp}{Proposition}
\newtheorem{remark}{Remark}
\newtheorem{example}{Example}
\numberwithin{equation}{section}
\title{Models of space-time random fields on the sphere}
\author{
Mirko D'Ovidio\footnote{Department of Basic and Applied Sciences for Engineering, Sapienza
University of Rome, Via A. Scarpa, 16, 00161, Roma, Italy},
Enzo Orsingher\footnote{Department of Statistical Sciences, Sapienza University of Rome, P.le Aldo Moro, 5, 00185, Rome, Italy},
Lyudmyla Sakhno\footnote{Taras Shevchenko National University of Kyiv, Ukraine}
}
\date{}
\begin{document}

\maketitle

\begin{abstract}
We study general models of random fields associated with  non-local equations in time and space. We discuss the properties of the corresponding angular power spectrum and find asymptotic results in terms of random time changes.
\end{abstract}

{\bf Keywords:}  fractional equations, spherical Brownian motion, subordinators, random fields, Laplace-Beltrami operators, spherical harmonics.

{\bf AMS MSC 2010:} 60G60; 60G22; 60H99

\section{Introduction}

The models of spherical random fields are in great demand in various applied areas such as geophysics, geodesy, planetary sciences, astronomy, cosmology and others. 
In recent years one can observe the growing popularity of stochastic partial differential equations   for modeling space-time random fields. Solutions to stochastic Cauchy problems for various classes of partial differential equations on the sphere admit exact series representations, which is important, in particular, for numerical approximation of such random fields, since this can be achieved effectively by truncating the corresponding expansions (see, e.g., \cite{ABOW} and references therein).

Papers \cite{D}, \cite{Dov} develop the approach to construct time dependent random fields on the sphere through coordinates change and subordination. These models of random fields arise as solutions to partial differential equations with operators of a particular form and random initial condition represented by a Gaussian random field. 

In the present paper we generalize results of paper \cite{D} and consider  random fields arising  as
solutions to the fractional equations of the form
\begin{equation}
\left( \gamma -\Psi(-\Delta_{\mathbb{S}^2_1}) +\mathfrak{D}^\Phi_t\right) X_{t}(x)=0, \,  x\in \mathbb{S}_{1}^{2},\;t>0,\;\gamma \ge0
 \label{eq1-intro}
\end{equation}%
subject to the initial condition $X_{0}(x)=T(x)$, with $T(x), x\in \mathbb{S}_{1}^{2}$, being a square integrable isotropic Gaussian random field.

In the above equation,  the generalized Laplace-Beltrami operator
$\Psi(-\Delta_{\mathbb{S}^2_1})$ is defined in terms of the
transition semigroup of the subordinate rotational Brownian motion
$B^\Psi_t=B_{H_t}$, where $H_t$ is a subordinator with the Laplace
exponent $\Psi$, and this covers, in particular, the case of
fractional Laplace operator $(-\Delta_{\mathbb{S}^2_1})^\alpha$. The time
derivative $\mathfrak{D}^\Phi_t$ is the generalized fractional convolution-type derivative associated with the Bern\v{s}tein
function $\Phi$, which reduces to 
  the
Caputo-Djrbashian fractional derivative $\frac{\partial ^{\beta }}{\partial t^{\beta }}$ for $\Phi(\lambda)=\lambda^\beta$. 

Note that the treatment of the generalized Laplace-Beltrami
operator $\Psi(-\Delta_{\mathbb{S}^2_1})$, where $\Psi$ is some
Bern\v{s}tein function, by means of the transition semigroup of the
subordinate rotational Brownian motion $B^\Psi_t$ gives the
possibility of a deeper insight into the structure of the solutions
$X_{t}(x)$ of the equation \eqref{eq1-intro}, and permits us to obtain
not only its Karhunen-Lo\`{e}ve expansion, but also its
representation as a coordinate changed random field.

It was 
shown in \cite{T} that convolution-type derivatives provide the unifying framework for the study of
subordinators and their inverse processes, and, in particular, the governing equations for
densities of subordinators and their inverses are obtained in terms of the convolution type
derivatives. The introduction of these derivatives has also inspired numerous recent
studies of new types of equations suitable to describe anomalous diffusion and other
complex processes.

We show that solution to \eqref{eq1-intro} is a time-varying random field with the following representation in terms of spherical harmonics $Y_{lm}$:
\begin{equation}
X_{t}(x)=\sum_{l=0}^{\infty }\sum_{m=-l}^{+l} a_{lm}   \,
\widetilde{l}(t, \gamma +\Psi (\mu _{l}))
 Y_{lm}(x), \,  x\in \mathbb{S}_{1}^{2},\;t>0,
\label{intr2}\end{equation}
where
\begin{equation*}
a_{lm}=\int_{\mathbb{S}_{1}^{2}}X_{0}(x) Y_{lm}^{\ast }(x)\lambda(dx),
\end{equation*}
and $\widetilde{l}$ is associated with the function $\Phi$, namely, $\widetilde{l}(t, \lambda)=\mathbf{E}[\exp( - \lambda L_t)]$ with $L$ being the inverse process for the subordinator with the Laplace exponent $\Phi$. Besides the Karhunen-Lo\`{e}ve expansion \eqref{intr2}, we also represent the solution as a coordinate changed random field.

The use of the generalized derivative $\mathfrak{D}^\Phi_t$ allows to constract more general models of random fields that those in paper \cite{D}, where the fractional Caputo-Djrbashian  derivative was used.

The paper is organized as follows. Sections \ref{S2}--\ref{S4} make necessary preparations and provide a concise review on the operators used in equations and facts on isotropic random fields. The main results are stated in Section \ref{S6}: we give the different representations for solutions to \eqref{eq1-intro} and discuss their properties.

\section{Generalized fractional operators}\label{S2}

To define our models of space-time random fileds, we will use partial differential equations with generalized fractional derivatives in time and space variables. 

\subsection{Generalized fractional Caputo-Djrbashian   or convolution-type derivative}

We first introduce the generalized fractional operator to act on the time variable.

Let us consider the subordinator $H$, that is, a non negative
L\'evy process with almost surely increasing paths.  
The process $H$ is
characterized by a L\'evy measure $\Pi$ on $(0, \infty)$ such that 
$\int_0^\infty (1 \wedge z) \Pi(dz)< \infty$ and the
corresponding Bernst\v{e}in function $\Phi$ (called the Laplace exponent or symbol of $H$). That is,
\begin{align*}
\mathbf{E}[\exp( - \lambda H_t)] = \exp(- t \Phi(\lambda))
\end{align*}
where
\begin{equation}\label{LevKinFormula}
\Phi(\lambda) = \int_0^\infty \left( 1 - e^{ - \lambda z} \right) \Pi(dz), \quad \lambda \geq 0. 
\end{equation}

In the general case, the expression for Bernst\v{e}in function \eqref{LevKinFormula} contains two more terms, namely, of the form  $a+b\lambda$, but we consider now the case $a=b=0$.

We also recall that
\begin{align}
\label{tailSymb}
\frac{\Phi(\lambda)}{\lambda} = \int_0^\infty e^{-\lambda z} \overline{\Pi}(z)dz, \qquad \overline{\Pi}(z) = \Pi((z, \infty))
\end{align}
and $\overline{\Pi}$ is the so called \emph{tail of the L\'evy measure}. For details, see the book  \cite{BerBook}.

Introduce
the inverse process associated to $H$ (and, so, associated to $\Phi$) as
$$L_t = \inf \{s \geq 0\,:\, H_s >t \}, \,\,\, t>0.$$ 
$L$
is a non negative process with almost surely non decreasing paths.

 We assume that $\Pi((0, \infty)) = \infty$ and, therefore, we focus only on strictly increasing subordinators. For this case, the inverse process $L$ turns out to be a continuous process. 
 Moreover, under the additional assumption that the tail $\overline{\Pi}(z)$, $z\ge 0$, is absolutely continous function,  the inverse process $L_t$ possesses the probability density function $l(s,t)$ for each $t>0$ (see, \cite{T}).

For the reader's convenience we recall symbols for some important classes of subordinators relevant to our study:
\begin{itemize}
\item $\Phi(\mu) = \mu^\alpha$: stable, $\Pi(y) = \alpha y^{-\alpha -1}/ \Gamma(1-\alpha)$;
\item $\Phi(\mu) = b\mu + \mu^\alpha$: stable with drift, $\Pi(\cdot)$ as above with the drift coefficient $b>0$;
\item $\Phi(\mu) = (\mu+\beta)^\alpha-\beta^\alpha$: tempered stable, $\Pi(y) = \alpha e^{-\beta y} y^{-\alpha -1}/ \Gamma(1-\alpha)$;
\item $\Phi(\mu) = \ln (1 + \mu )$: gamma, $\Pi(y) = y^{-1}e^{-y}$;
\item $\Phi(\mu) = \ln (1+ \mu^\alpha)$: geometric stable, $\Pi(y)=\alpha y^{-1}E_\alpha(-y)$ where
\begin{equation*}
E_\alpha(z) = \sum_{j=0}^\infty \frac{z^j}{\Gamma(\alpha j + 1)}
\end{equation*}
is the Mittag-Leffler function.
\end{itemize}

\begin{definition} Convolution-type derivative associated with the function $\Phi$ given by \eqref{LevKinFormula} is defined for the absolutely continuous function $u$ by the formula
\begin{align}
\label{D2} \mathfrak{D}^\Phi_t u(t) = 
\int_0^t \frac{\partial}{\partial t}u(t-s)\overline{\Pi}(s)ds.
\end{align}
\end{definition}
According to the definition, the  generalized fractional operator is characterized by the Bernst\v{e}in function $\Phi$. Thus, such operator can be associated with the processes $H$ and $L$ previously introduced. Moreover, it was shown \cite{T} that this operator can be used to study the properties of subordinators and their inverses in the unifying manner  and write the governing equations for their densities.

We notice that when $\Phi(\lambda)=\lambda$ (that is we deal with the ordinary derivative) we have that $H_t = t$ and $L_t=t$ almost surely.

In the case where
$\Phi(\lambda)=\lambda^{\alpha}$, $\alpha\in (0,1)$, we have that
\begin{equation*}
\mathfrak{D}^\Phi_t u(t) = \frac{d^{\alpha}}{dt^{\alpha}}u(t)=\frac{1}{\Gamma(1-\alpha)}\int_{0}^{t}\frac{u^\prime(s)}{\left(t-s\right)^{\alpha}}ds,
\end{equation*}
where $u^\prime= du/ds$, that is the well-known Caputo-Djrbashian fractional derivative.

Similarly to the Caputo-Djrbashian fractional derivative, the convolution type derivative can be characterized (and alternatively defined) by means of its Laplace transform.

Let $M>0$ and $w\geq 0$. Let $\mathcal{M}_w$ be the set of (piecewise) continuous functions on $[0, \infty)$ of exponential order $w$ such that $|u(t)| \leq M e^{wt}$. Denote by $\widetilde{u}$ the Laplace transform of $u$. Then, we define the operator $\mathfrak{D}^\Phi_t : \mathcal{M}_w \mapsto \mathcal{M}_w$ such that
\begin{align}
\label{PhiConvDef}
\int_0^\infty e^{-\lambda t} \mathfrak{D}^\Phi_t u(t)\, dt = \Phi(\lambda) \widetilde{u}(\lambda) - \frac{\Phi(\lambda)}{\lambda} u(0), \quad \lambda > w
\end{align}
where $\Phi$ is given in \eqref{LevKinFormula}. Since $u$ is exponentially bounded, the integral $\widetilde{u}$ is absolutely convergent for $\lambda>w$.  By Lerch's theorem the inverse Laplace transforms $u$ and $\mathfrak{D}^\Phi_tu$ are uniquely defined. Formula \eqref{PhiConvDef} can be rewritten as follows
\begin{align}
\label{PhiConv}
\Phi(\lambda) \widetilde{u}(\lambda) - \frac{\Phi(\lambda)}{\lambda} u(0) = & \left( \lambda \widetilde{u}(\lambda) - u(0) \right) \frac{\Phi(\lambda)}{\lambda}
\end{align}
and thus, $\mathfrak{D}^\Phi_t$ can be regarded as a convolution involving the ordinary derivative and the inverse transform of \eqref{tailSymb} iff $u \in \mathcal{M}_w \cap C([0, \infty), \mathbb{R}_+)$ and $u^\prime \in \mathcal{M}_w$.

The operator $\mathfrak{D}^\Phi_t$ have been introduced and
studied in the papers \cite{K}, \cite{Chen17},
\cite{T}. 

\medskip
In Section \ref{S6} we study random fields on the sphere governed by equations with convolution-type derivatives $\mathfrak{D}^\Phi_t$. The following well-known fact will be important further on. Thus, we state it properly.

\begin{prp}
\label{prUseful}
Let $L$ be the inverse process for a subordinator with
Bern\v{s}tein function $\Phi$, and assume that $\Pi(0,\infty)=\infty$ and the
tail $\overline\Pi(s)=\Pi(s,\infty)$ is absolutely continuous. For the 
process $L_t$, $t>0$ we have
\begin{align}\label{l}
l(t,x) = \mathbf{P}(L_t \in dx)/dx, \quad t,x>0
\end{align}
with  Laplace transform
\begin{equation}\label{tilde_l}
\tilde{l}(t,\lambda)=\int_{0}^{\infty}e^{-\lambda x} l(t,x)dx=\mathbf{E} [e^{-\lambda L(t)}]
\end{equation}
which satisfies the following equation
\begin{equation}\mathfrak{D}^\Phi_t\tilde{l}(t,\lambda)=-\lambda \tilde{l}(t,\lambda),\label{lapl}\end{equation}
and thus, $\tilde{l}(t,\lambda)$ is an eigenfunction of the operator $\mathfrak{D}^\Phi_t$ corresponding to the eigenvalue $\lambda$.
\end{prp}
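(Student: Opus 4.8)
The plan is to pass to the Laplace domain in the time variable $t$ and reduce the claimed identity to a purely algebraic one, which can then be inverted by Lerch's theorem. Writing $\eta$ for the Laplace variable conjugate to $t$, I would set
\[
\hat{l}(\eta,\lambda) = \int_0^\infty e^{-\eta t}\,\tilde{l}(t,\lambda)\,dt,
\]
the double Laplace transform of the density $l(t,x)$. Applying the transform in $t$ to both sides of \eqref{lapl} and using the defining property \eqref{PhiConvDef} of $\mathfrak{D}^\Phi_t$ together with the initial value $\tilde{l}(0,\lambda)=\mathbf{E}[e^{-\lambda L_0}]=1$ (since $L_0=0$ almost surely), the equation \eqref{lapl} becomes equivalent to
\[
\Phi(\eta)\,\hat{l}(\eta,\lambda) - \frac{\Phi(\eta)}{\eta} = -\lambda\,\hat{l}(\eta,\lambda),
\]
that is, to the algebraic identity $\hat{l}(\eta,\lambda) = \Phi(\eta)/\big(\eta(\Phi(\eta)+\lambda)\big)$. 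Thus the whole proposition reduces to computing $\hat{l}$ explicitly.

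For this computation I would exploit the inverse relationship $L_t=\inf\{s\ge 0: H_s>t\}$, which gives $\{L_t\le x\}=\{H_x\ge t\}$ for the strictly increasing subordinator $H$. Taking the Laplace transform in $t$ of $\mathbf{P}(L_t\le x)=\mathbf{P}(H_x\ge t)$ and using $\mathbf{E}[e^{-\eta H_x}]=e^{-x\Phi(\eta)}$ yields
\[
\int_0^\infty e^{-\eta t}\,\mathbf{P}(L_t\le x)\,dt = \frac{1}{\eta}\big(1-e^{-x\Phi(\eta)}\big).
\]
Differentiating in $x$ (justified by dominated convergence) produces the single transform $\int_0^\infty e^{-\eta t} l(t,x)\,dt = (\Phi(\eta)/\eta)\,e^{-x\Phi(\eta)}$, and a further Laplace transform in $x$ against $e^{-\lambda x}$ gives exactly $\hat{l}(\eta,\lambda)=\Phi(\eta)/\big(\eta(\Phi(\eta)+\lambda)\big)$.

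With $\hat{l}$ in hand, one checks directly that both sides of the algebraic identity above equal $-\lambda\Phi(\eta)/\big(\eta(\Phi(\eta)+\lambda)\big)$, so \eqref{lapl} holds in the Laplace domain; uniqueness of Laplace transforms (Lerch's theorem, already invoked for \eqref{PhiConvDef}) transfers this back to the time domain. I expect the main obstacle to be the careful justification of the two steps surrounding the density rather than the algebra: namely establishing the boundary identity $\{L_t\le x\}=\{H_x\ge t\}$ with the correct treatment of strict versus non-strict inequalities (which is where the standing assumptions $\Pi(0,\infty)=\infty$ and absolute continuity of $\overline{\Pi}$ enter, guaranteeing that $L$ is continuous and that $l(t,x)$ exists), and justifying the differentiation under the integral sign in $x$. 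Everything afterwards is a routine partial-fraction verification.
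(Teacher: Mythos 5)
Your proof is correct, and it takes a genuinely different route from what the paper presents. The paper in fact gives no self-contained proof of Proposition \ref{prUseful}: it defers to \cite{K}, \cite{T}, \cite{B}, and in a remark sketches only the approach of \cite{B}, where \eqref{lapl} is obtained as the $x=0$ case of the difference-differential equations $\mathfrak{D}^\Phi_t p_x(t)=-\lambda\left[p_x(t)-p_{x-1}(t)\right]$ governing the time-changed Poisson process $N_{L_t}$, via the identification $p_0(t)=\mathbf{P}(N_{L_t}=0)=\tilde l(t,\lambda)$. Your double-Laplace-transform argument is instead, in essence, the classical derivation one finds in \cite{T} (and in spirit in \cite{K}): you compute $\hat l(\eta,\lambda)=\Phi(\eta)/\bigl(\eta(\Phi(\eta)+\lambda)\bigr)$ from the inverse relation $\{L_t\le x\}=\{H_x\ge t\}$ and the identity $\int_0^\infty e^{-\eta t}\mathbf{P}(H_x>t)\,dt=\bigl(1-e^{-x\Phi(\eta)}\bigr)/\eta$, then observe that with $\tilde l(0,\lambda)=1$ the defining property \eqref{PhiConvDef} turns \eqref{lapl} into a partial-fraction identity in the transform domain, inverted by Lerch. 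What your route buys is self-containedness within the paper's own framework — the operator is \emph{defined} through \eqref{PhiConvDef}, so matching Laplace transforms is exactly the sense in which \eqref{lapl} holds — and it makes transparent where the standing hypotheses enter: $\Pi(0,\infty)=\infty$ gives a.s.\ strict increase of $H$, hence the boundary identity (including the case $H_x=t$) and continuity of $L$, while absolute continuity of $\overline\Pi$ guarantees the density $l(t,x)$ exists. The Poisson route of \cite{B} instead yields \eqref{lapl} as a corollary of a richer family of governing equations at the cost of introducing an auxiliary process. Two small polish points: your appeal to dominated convergence for the $x$-differentiation is cleaner via Tonelli — since the density exists, $\int_0^\infty e^{-\eta t}\mathbf{P}(L_t\le x)\,dt=\int_0^x\bigl(\int_0^\infty e^{-\eta t}l(t,y)\,dt\bigr)dy$, and comparing with the everywhere-differentiable function $\bigl(1-e^{-x\Phi(\eta)}\bigr)/\eta$ identifies $\int_0^\infty e^{-\eta t}l(t,x)\,dt=(\Phi(\eta)/\eta)e^{-x\Phi(\eta)}$ for a.e.\ $x$, which is all the subsequent $\lambda$-transform needs; and you should record explicitly that $\tilde l(\cdot,\lambda)$ is bounded by $1$ and continuous, hence lies in $\mathcal{M}_0$, so Lerch's uniqueness upgrades the a.e.\ identity in $t$ to equality of continuous representatives for all $t$.
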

\begin{remark}
For $L$ being the inverse process for stable subordinator, that is, $\Phi(\lambda)=\lambda^\alpha$, \eqref{lapl} reduces to the well known fact that the Mittag-Leffler function is an eigenfunction of the Caputo-Djrbashian fractional derivative:
\begin{equation*}\frac{\partial^\alpha}{\partial t^\alpha}E_\alpha(-t^\alpha\lambda)=-\lambda E_\alpha(-t^\alpha\lambda).\end{equation*}
\end{remark}
\begin{remark}
The proof of the above
Proposition has been derived, by different approaches, in \cite{K}, \cite{T}, \cite{B}. 
In particular, 
in \cite{B}, the authors consider the time-changed Poisson process $N_{L_t}$ (with $L_t$ being an inverse subordinator independet of the process $N$) and show that its marginal distributions
    $p_x(t)=\mathbf{P}(N_{L_t}=x)$, $x=0,1,\ldots$, 
    satisfy the difference-differential equations
    \begin{equation*}
        \mathfrak{D}^\Phi_t
        p_x(t)=-\lambda\left[p_x(t)-p_{x-1}(t)\right]
        \end{equation*}
with initial condition $p_x(0)=1, x=0$ and $p_x(0)=0, x\ge 1$.
        Then \eqref{lapl} is deduced as a consequence of the above equation, since $p_0(t)=\mathbf{P}(N_{L_t}=0)=\int_0^\infty e^{-\lambda x} {l}(t,x)dx =\tilde{l}(t,\lambda)$.
\end{remark}

\subsection{Generalized fractional Laplacian on the sphere}\label{subsec2.2}

Let  $f \in L^2(\mathbb{S}^2_1)=L^2(\mathbb{S}^2_1, \mu)$, where $\mu$ is the Lebesgue measure on the unit sphere $\mathbb{S}^2_1$: 
$$\mu(dx) = \mu(d\vartheta, d\varphi) = d\varphi\, d\vartheta\, \sin \vartheta$$ with $x \in \mathbb{S}^2_1$ being represented as
\begin{equation*}
x = (\sin \vartheta \cos \varphi, \sin \vartheta \sin \varphi, \cos \vartheta), \quad \vartheta \in  [0, \pi], \; \varphi \in [0, 2\pi).
\end{equation*} 

The set of spherical harmonics $\{Y_{lm}:\, l \geq 0,\; m=-l, \ldots , +l\}$ represents an orthogonal basis for the space $L^2(\mathbb{S}^2_1)$.
 
Recall that for a fixed integer $l$  the spherical harmonics
\begin{equation*}
Y_{lm}(\vartheta, \varphi) = \sqrt{\frac{2l+1}{4 \pi} \frac{(l-m)!}{(l+m)!}} Q_{lm}(\cos \vartheta) e^{im\varphi}
\end{equation*}
(or linear combination of them) solve the eigenvalue problem
\begin{equation}
\triangle_{\mathbb{S}_{1}^2} Y_{lm}= - \mu_l \, Y_{lm} \label{eigenY}, \quad l \geq 0, \; |m| \leq l
\end{equation}
where, the eigenvalues are given by
$$\mu_l=l(l+1)$$
and the operator
\begin{align}
\triangle_{\mathbb{S}_{1}^2} = &  \frac{1}{\sin^2 \vartheta} \frac{\partial^2}{\partial \varphi^2} + \frac{1}{\sin \vartheta} \frac{\partial}{\partial \vartheta} \left( \sin \vartheta \frac{\partial}{\partial \vartheta} \right) , \quad \vartheta \in [0,\pi],\; \varphi \in [0, 2\pi), \label{spherical-laplace}
\end{align}
is the spherical Laplace operator (called also Laplace-Beltrami
operator). The spherical
harmonics are written in terms of
\begin{equation*}
Q_{lm}(z)=(-1)^m (1-z^2)^{m/2}\frac{d^m}{d z^m}Q_{l}(z)
\end{equation*}
which are the associated Legendre functions and the Rodrigues' formula
$$ Q_l(z) = \frac{1}{2^l l!}\frac{d^l}{dz^l} (z^2 - 1)^l $$
defines the Legendre polynomials. 

For $f \in L^2(\mathbb{S}^2_1)$ we have the representation
\begin{align*}
f(x) = \sum_{l=0}^\infty \sum_{m=-l}^l f_{lm} Y_{lm}(x), \quad x \in \mathbb{S}^2_1,
\end{align*}
which holds in the $L^2$ sense, where
\begin{equation}
f_{lm}=\int_{\mathbb{S}^2_1} f(x) Y_{lm}^*(x)\,dx=\int_{0}^{2\pi} \int_0^\pi f(\vartheta, \varphi) Y^*_{lm}(\vartheta, \varphi)\, \sin \vartheta\, d\vartheta\, d\varphi, \quad |m|\leq l, \; l=0,1,2,\ldots  . \label{f-coeff-exp}
\end{equation}
(see, for example, the Peter-Weyl representation theorem on the sphere in \cite{DomPec-book} and references therein).

The {angular power spectrum} of $f$ is defined as
\begin{equation}\label{def-angular-spectrum}
f_l  = \sum_{|m|\leq l } | f_{lm}|^2=  \sum_{|m| \leq l} \Bigg| \int_{\mathbb{S}^2_1} f(x) Y^*_{lm}(x)\mu(dx) \Bigg|^2, \quad l=0,1,2, \ldots .
\end{equation}

We next define the generalized fractional Laplace  operators on the sphere following \cite{Dov}, \cite{D}.
Let $F(t)$, $t\ge 0$ be a L\'evy subordinator with  the
 Laplace exponent $$\Psi(\lambda)  = b\lambda + \int_0^\infty \left( 1 - e^{ - \lambda z} \right) M(dz),\,\, b\ge0,\,\, \lambda \geq 0,$$ with $M$ being the corresponding  L\'evy measure.

Let $B_{t}$, $t\ge 0$, be a Brownian motion on the unit sphere $\mathbb{S}^2_1$.  Its transition density can be writtes as follows (see 
\cite{Yios1949}): 
\begin{eqnarray}
Pr\{x+B_{t}\in dy\}/dy &=&Pr\{B_{t}\in dy\,|\,B_{0}=x\}/dy  
=\sum_{l=0}^{\infty }\sum_{m=-l}^{+l}e^{-t\mu _{l}}\mathcal{Y}_{l,m}(y)%
\mathcal{Y}_{l,m}^{\ast }(x).  
\label{PdiB}
\end{eqnarray}%
Consider the the initial-value problem 
\begin{equation}
\left\{ 
\begin{array}{ll}
\displaystyle\frac{\partial u}{\partial t}=\Delta _{\mathbf{S}_{1}^{2}}u, & 
x\in \mathbf{S}_{1}^{2},\,t>0 \\ 
\displaystyle u(x,0)=f(x) & 
\end{array}%
\right. 
\end{equation}%
for  $f\in L^2(\mathbf{S}_{1}^{2}).$
The solution to the above problem can be written as follows:
\begin{equation}
u(x,t)=P_{t}f(x)=\mathbb{E}f(x+B_{t})=\int_{\mathbf{S}_{1}^{2}}f(y)Pr\{x+B_{t}\in
dy\} =\sum_{l=0}^{\infty }\sum_{m=-l}^{+l}e^{-t\mu _{l}}\mathcal{Y}_{l,m}(x)f_{lm},
 \label{semigB}
\end{equation}%
that is, the solution is given by the transition semigroup of the rotational Brownian motion $B_{t}$,  $t>0$, with values in $\mathbf{S}_{1}^{2}$.

In \cite{Dov} the following operator acting on $f \in
L^2(\mathbf{S}^2_1)$ was introduced:
\begin{equation}
\Psi(-\Delta _{\mathbf{S}_{1}^{2}}) f(x) := \int_0^\infty \left( P_t\, f(x) - f(x) \right) M(dt).
\label{frac-oper-sphere}
\end{equation}
It was shown in  \cite{Dov} (see also \cite{D}) that 
\begin{equation}
\Psi(-\Delta _{\mathbf{S}_{1}^{2}}) Y_{lm}(x) = -\Psi(\mu_l)Y_{lm}(x),
\label{eigenPsi}
\end{equation}
and thus the spherical harmonics are the eigenfunctions of the operator $\Psi(-\Delta _{\mathbf{S}_{1}^{2}})$ with the eigenvalues $-\Psi(\mu_l)$.This fact was stated by direct calculations using the
semigroup approach and the spectral representation of the operator $\Psi(-\Delta)$ (or Phillips representation) in \eqref{frac-oper-sphere}.

Basing on \eqref{eigenPsi}, the action of the operator $\Psi(-\Delta _{\mathbf{S}_{1}^{2}})$ can be also defined by means of a series representation as given below.

Let us consider the space of functions
\begin{equation}
H^{s}(\mathbb{S}^2_1) = \left\lbrace f \in L^2(\mathbb{S}^2_1):\,  \sum_{l=0}^{\infty} (2l+1)^{2s} f_l < \infty  \right\rbrace \label{SobolevSp}
\end{equation}
where
$f_l$ is the angular spectrum of $f$ (see \eqref{def-angular-spectrum}).

\begin{definition}
Let $f \in H^s(\mathbb{S}^2_1)$ and $s>5/4$. Then 
\begin{align}
\Psi(-\Delta_{\mathbf{S}_{1}^{2}}) f(x) := \sum_{l=0}^\infty \sum_{m=-l}^{+l} f_{lm} Y_{lm}(x) \Psi(\mu_l). \label{def-der-psi-gen}
\end{align}
\end{definition}
Note that since $\Psi$ is the symbol of a subordinator, as $l \to \infty$, we have that $\Psi(l)/l \to 0$ (we write $\Psi(l) < l$ for large $l$) whereas, as $l \to 0$, $\Psi(l)\to 0$. The series in \eqref{def-der-psi-gen}
converges absolutely and uniformly. This can be proved by considering that $f_l < l^{-2s}$ with $s> 5/4$  (indeed, $f \in H^s(\mathbb{S}^2_1)$) and for the harmonic eigenfunction we have that $\| Y_{lm}\|_{\infty} < l^{1/2}$ (see \cite{Quantum}). Since $\Psi(\mu_l) < l^2$ we have the claimed convergence. We refer for more details to \cite{D}, \cite{Dov}.

\section{Isotropic random fields on the unit-radius sphere}\label{S4}

Let us consider a real-valued, zero-mean, isotropic Gaussian random field $T(x)$, $x\in \mathbb{S}^2_1$, that is, we assume 
$\mathbb{E}T(x)=0$, $\mathbb{E}T^2(x)<\infty$, and for any $g\in SO(3)$ (the special group of rotations in $\mathbb{R}^{3}$) we have: $\mathbb{E}T(gx_1)T(gx_2)=\mathbb{E}T(x_1)T(x_2)$,  $x, x_1, x_2\in \mathbb{S}^2_1$.

For the field $T$ we can write the
 spectral representation
\begin{equation}
T(x) = \sum_{l=0}^{\infty} \sum_{m=-l}^{+l} a_{lm}Y_{lm}(x) = \sum_{l=0}^{\infty} T_l(x), \label{Trep}
\end{equation}
where
\begin{equation}
a_{lm}= \int_{\mathbb{S}^2} T(x) Y_{lm}^*(x) \lambda(dx) \label{alm-coeff-intro}
\end{equation}
are Fourier random coefficients, $Y_{lm}(x)$ are spherical
harmonics. Convergence in \eqref{Trep} holds in the mean square sense, both with respect to  $L^{2}(dP\times \mu (dx))$
 and with respect to $L^{2}(dP)$ for fixed $x \in \mathbb{S}^2_1$, $\mu(dx)$ is the Lebesgue measure on the unit sphere $\mathbb{S}^2_1$ (see, e.g., \cite{DomPec-book},\cite{Schoenb}):
 \begin{equation*}
\lim_{L\to \infty }\mathbf{E}\Big\| T(x) - \sum_{l=0}^{L}
\sum_{m=-l}^{+l} a_{lm} Y_{lm}(x)\Big\|^2_{L^{2}(\mathbb{S}^2_1)} =\lim_{L\to \infty }\mathbf{E}\Big[ \int_{\mathbf{S}^{2}}\big( T(x) - \sum_{l=0}^{L} \sum_{m=-l}^{+l} a_{lm} Y_{lm}(x) \big)^{2} \mu (dx) \Big] =0
\end{equation*}
and 
\begin{equation*}
\lim_{L\rightarrow \infty }\mathbf{E}\left(T(x)-\sum_{l=0}^{L}\sum_{m=-l}^{+l}a_{lm}Y_{lm}(x)\right)^{2}= 0.
\end{equation*}

\begin{remark}The representation \eqref{Trep} can be deduced as a consequence of the stochastic Peter-Weyl theorem and holds, more generally, for square integrable strictly isotropic random fields, that is, random fields with finite dimensional distributions invariant w.r.t. rotations $g\in SO(3)$: 
$$\{T(x_1), \ldots, T(x_n)\} \stackrel{d}{=} \{T(gx_1), \ldots, T(gx_n)\},$$
where $\overset{d}{=}$ denotes equality in distribution (see, e.g., \cite{DomPec-book}).
\end{remark}

The random coefficients  \eqref{alm-coeff-intro} are zero-mean Gaussian complex random variables such that
\begin{equation}
\mathbf{E}[a_{lm}a^*_{l^\prime m^\prime}] = \delta_l^{l^\prime}\delta_m^{m^\prime} C_l = \delta_l^{l^\prime} \mathbf{E} | a_{lm} |^2 \label{angular-power-C}
\end{equation}
where $C_l$, $l \geq 0$ is the angular power spectrum of the random field $T$ which fully characterizes, under Gaussianity, the dependence structure of $T$. As usual, we denote by
\begin{equation}
\delta_{a}^{b} = \left\lbrace \begin{array}{l}
1, \quad a=b\\
0, \quad a \neq b
\end{array} \right.
\end{equation}
the Kronecker's delta symbol and the symbol ''$^*$'' stands for complex conjugation. We refer to the book by Marinucci and Peccati \cite{DomPec-book} for a deep discussion and presentation of results concerning this field.

In analogy with \eqref{SobolevSp} one can also introduce the space of processes
\begin{align}
\label{spHrond}
\mathcal{H}^s(\mathbb{S}^2_1) = \left\lbrace T \textrm{ as in \eqref{Trep} with } \sum_{lm} \, (\mu_l)^{s}\, \mathbf{E}[|a_{lm}|^2] < \infty \right\rbrace.
\end{align}
Notice that the summability condition in \eqref{spHrond} can be written as
\begin{align}
\label{conSumFrac}
\sum_{l \geq 0}\, (\mu_l)^{s}\, \frac{(2l+1)}{4\pi}\, C_l < \infty
\end{align}
by taking into consideration \eqref{angular-power-C}. We also notice that $\mathcal{H}^2(\mathbb{S}^2_1) \subset L^2(\mathbb{S}^2_1)$ and in particular, the summability condition for $T$ says that 
\begin{align}
\label{condC}
C_l \sim l^{-\theta}, \quad \textrm{with} \quad \theta>2.
\end{align} 
(We use here the usual notation $g \sim f$ meaning that
$
\frac{g(z)}{f(z)} \to 1$ as $z \to \infty.$)

The decay of the angular power spectrum is connected to the smoothness of the covariance. The relation to sample H\"{o}lder continuity and sample differentiability of random fields has been discussed in \cite{LangSch} (see Theorem 4.7). The authors also provided a deep discussion about summability of the angular power spectrum and the formalization in terms of weighted Sobolev space.

Let us take  the isotropic Gaussian random field introduced above as initial condition for the fractional Cauchy problem
\begin{align}\frac{\partial u(t,x)}{\partial t} + \Psi(-\Delta _{\mathbf{S}_{1}^{2}})u(t,x)=0,\label{fracCauchy}\\
u(0,x)=T(x),\notag
\end{align}
where the fractional operator is introduced in section \ref{subsec2.2}, with  $\Psi$ being the Laplace exponent of the subordinator $F$.

In \cite{Dov} it was shown that solution to \eqref{fracCauchy} is given by
\begin{equation*}
u(t,x)=\sum_{l=0}^{\infty }\sum_{m=-l}^{+l}e^{-\Psi(\mu_l)t} a_{lm}   
 Y_{lm}(x)=\mathbf{E}[T(x + B_{F_t}) | \mathfrak{F}_T],
\label{sol-fracCauchy}
\end{equation*}
where $\mathfrak{F}_T$ is the $\sigma$-field generated by $T$ and $B$ is a rotational Brownian motion on the sphere $\mathbb{S}^2_1$ time-changed by the subordinator $F$.

Generalization of \eqref{fracCauchy} by means of the use of the Caputo-Djrbashian fractional derivative in time variable was studied in \cite{D}. In the next section we study the further generalization using the convolution-type derivative defined in section 2.1.

\section{Models of random fields on the sphere}\label{S6}

\subsection{Non-local equations}

Introduce now models of random fields on the sphere driven by equations with fractional operators. We consider fractional operators in time and space associated with  Bern\v{s}tein functions $\Phi$ and $\Psi$ respectively, as defined in Section \ref{S2} above.
 
We suppose that the function $\Phi$ corresponds to the subordinator $H$,   $L$ is its inverse processes possessing the density $l$ with Laplace transform $\widetilde l$ as introduced in Proposition 1 in formulas \eqref{l}--\eqref{tilde_l}. In what follows, we will assume that the conditions of Proposition 1 are valid.

As the initial condition for the fractional equations in the theorems below we consider the isotropic Gaussian random field 
 $T$  defined in \eqref{Trep}.

\begin{thm}
The solution in $L^2(dP \times d\lambda)$ to the fractional equation
\begin{equation}
\left( \gamma - \Psi(-\Delta_{\mathbb{S}^2_1}) + \mathfrak{D}^\Phi_t \right) \,X_{t}(x)=0,\quad x\in \mathbb{S}_{1}^{2},\;t\geq 0, \quad \gamma>0
\label{rand-pde}
\end{equation}%
with initial condition $X_{0}(x)=T(x)$ is a time-dependent random field on
the sphere $\mathbb{S}_{1}^{2}$ written as
\begin{equation}
X_{t}(x)=\sum_{l=0}^{\infty }\sum_{m=-l}^{+l} a_{lm}   \,
\widetilde{l}(t, \gamma +\Psi (\mu _{l}))
 Y_{lm}(x)
\label{X-sol-gammaPDE}
\end{equation}
where
\begin{equation}
a_{lm}=\int_{\mathbb{S}_{1}^{2}}X_{0}(x) Y_{lm}^{\ast }(x)\lambda(dx).
\end{equation}
\label{thm:one}
\end{thm}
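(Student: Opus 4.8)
The plan is to use separation of variables combined with the spectral decomposition of the initial random field $T$. Since the spherical harmonics $Y_{lm}$ form an orthogonal basis and are the eigenfunctions of $\Psi(-\Delta_{\mathbb{S}^2_1})$ with eigenvalues $-\Psi(\mu_l)$ by \eqref{eigenPsi}, I would first seek a solution of the form $X_t(x) = \sum_{l,m} a_{lm}\, g_{lm}(t)\, Y_{lm}(x)$, where the random coefficients $a_{lm}$ are exactly those of the initial field $T$ from \eqref{Trep}, and $g_{lm}(t)$ are deterministic time-dependent coefficients to be determined with $g_{lm}(0)=1$ so that the initial condition $X_0(x)=T(x)$ is satisfied. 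The key observation is that the operator $\Psi(-\Delta_{\mathbb{S}^2_1})$ acts only on the spatial variable while $\mathfrak{D}^\Phi_t$ acts only on time, so each mode $(l,m)$ decouples.

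Substituting this ansatz into \eqref{rand-pde} and using the linearity of both operators together with \eqref{eigenPsi}, the equation reduces, mode by mode, to
\begin{equation*}
\gamma\, g_{lm}(t) + \Psi(\mu_l)\, g_{lm}(t) + \mathfrak{D}^\Phi_t g_{lm}(t) = 0,
\end{equation*}
that is, $\mathfrak{D}^\Phi_t g_{lm}(t) = -\bigl(\gamma + \Psi(\mu_l)\bigr)\, g_{lm}(t)$. This is precisely the eigenvalue equation \eqref{lapl} from Proposition \ref{prUseful}, with eigenvalue $\lambda = \gamma + \Psi(\mu_l)$. Hence the unique solution satisfying $g_{lm}(0)=1$ is $g_{lm}(t) = \widetilde{l}\bigl(t, \gamma + \Psi(\mu_l)\bigr)$, which depends on $l$ but not on $m$, yielding exactly \eqref{X-sol-gammaPDE}. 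Uniqueness at the level of the coefficients follows from Lerch's theorem via the Laplace-transform characterization \eqref{PhiConvDef} of $\mathfrak{D}^\Phi_t$.

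I expect the main obstacle to be the rigorous justification that the formal term-by-term application of the operators to the infinite series is valid, and that the resulting object genuinely lies in $L^2(dP \times d\lambda)$. For this I would verify convergence of the series defining $X_t(x)$ in the mean-square sense: using \eqref{angular-power-C} one computes $\mathbf{E}\,\|X_t\|^2_{L^2(\mathbb{S}^2_1)} = \sum_l \frac{2l+1}{4\pi}\, C_l\, \bigl|\widetilde{l}(t,\gamma+\Psi(\mu_l))\bigr|^2$, and since $\widetilde{l}(t,\lambda) = \mathbf{E}[e^{-\lambda L_t}] \in (0,1]$ is bounded by $1$, this is dominated by $\sum_l \frac{2l+1}{4\pi} C_l < \infty$, which holds because $T \in L^2$ (the case $s=0$ of the summability condition \eqref{conSumFrac}). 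The boundedness of the eigenfunction $\widetilde{l}$ is what makes the solution inherit square-integrability from the initial datum, so no extra Sobolev regularity beyond $T \in L^2(\mathbb{S}^2_1)$ is needed here. I would close by noting that termwise application of $\Psi(-\Delta_{\mathbb{S}^2_1})$ is legitimate once one restricts to the regime where \eqref{def-der-psi-gen} converges, or alternatively by working throughout at the level of Fourier coefficients and appealing to the uniqueness granted by Lerch's theorem, thereby avoiding delicate interchange-of-summation arguments.
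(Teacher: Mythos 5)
Your proposal is correct and follows essentially the same route as the paper's proof: separation of variables resting on the two eigenfunction facts, namely Proposition \ref{prUseful} (that $\widetilde{l}(t,\lambda)$ satisfies $\mathfrak{D}^\Phi_t\widetilde{l}(t,\lambda)=-\lambda\widetilde{l}(t,\lambda)$) and the relation $\Psi(-\Delta_{\mathbb{S}^2_1})Y_{lm}=-\Psi(\mu_l)Y_{lm}$, so that each harmonic mode decouples and forces $g_{lm}(t)=\widetilde{l}\bigl(t,\gamma+\Psi(\mu_l)\bigr)$. Your extra steps --- uniqueness of the modal coefficients via Lerch's theorem and the mean-square convergence bound using $0<\widetilde{l}(t,\lambda)\le 1$ --- merely make explicit what the paper's verification-by-substitution leaves implicit, and are welcome additions rather than a different argument.
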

\begin{proof} The proof follows the similar lines as those for the proof of Theorem 1 in \cite{D}. In fact, the proof is deduced basing on the common method of separation of variables, the essential component of which is the knowledge of eigenfunctions for the operators involved into the equation. 
 We present the main steps.
 
For the generalized D-C convolution-type derivative we have (see Proposition 1):
\begin{equation}
\mathfrak{D}^\Phi_t \widetilde{l}(t,\lambda)=-\lambda \,
\widetilde{l}(t,\lambda), \quad \lambda>0 \label{eigen-E}
\end{equation}
with $\widetilde{l}(t,\lambda)$ being the Laplace transform of
the inverse subordinator $L_t$ defined in \eqref{tilde_l}.

For the generalized Laplace operator $\Psi(-\Delta)$, we know that
\begin{equation}\label{eigenPsiDelta}
\Psi(-\Delta_{\mathbb{S}^2_1})Y_{lm}(x)=-\Psi(\mu_l)Y_{lm}(x).
\end{equation}
This fact was shown in \cite{D} by direct calculations using the
semigroup approach and the spectral representation \eqref{frac-oper-sphere} of the operator $\Psi(-\Delta)$. Note that, we can also deduce from the result by Dautray
and Lions (see, \cite{DL}, pp. 116-120) that the operator
$\Psi(-\Delta_{\mathbb{S}^2_1})$ has the eigenvalues $\Psi(\mu_l)$ and 
\eqref{eigenPsiDelta} holds.

Thus, assuming that \eqref{X-sol-gammaPDE} holds true, we have that
\begin{align}\label{act1}
\left( \gamma - \Psi(-\Delta_{\mathbb{S}^2_1}) \right) X_t(x) = & \sum_{l=0}^\infty
\sum_{m=-l}^{+l} a_{l,m} \left( \gamma + \Psi(\mu_l) \right) \widetilde{l}(t, \gamma +\Psi (\mu _{l})) Y_{lm}(x).
\end{align}
On the other hand,  using \eqref{eigen-E}, we obtain
\begin{align}\label{act2}
\mathfrak{D}^\Phi_t \,X_{t}(x)
= -\sum_{l=0}^\infty \sum_{m=-l}^{+l} a_{l,m} \left( \gamma +\Psi(\mu_l) \right) \widetilde{l}(t, \gamma +\Psi (\mu _{l}))
Y_{l,m}(x). 
\end{align}
By summing up \eqref{act1} and \eqref{act2}, we obtain  \eqref{rand-pde} as claimed.
\end{proof}

We now show that the solution to the fractional equation \eqref{rand-pde} can be represented as coordinates changed random filed.
 
 Introduce the time dependent random field on the sphere $\mathbb{S}^2_1$,
\begin{align*}
T_t(x) = \sum_{l=0}^\infty \sum_{|m| \leq l} a_{lm} e^{- t \mu_l } \, Y_{lm}(x), \quad x\in \mathbb{S}^2_1,\,\, t\ge 0.
\end{align*}
Let $L$ be the inverse process associated with function $\Phi$ as introduced above, $F$ be the subordinator with the Bern\v{s}tein function $\Psi$. 

Define
$\tau_t = F_{L_t}$ ($F_{L_t} = F \circ L_t$) to be the composition of $F$ and $L$.
\begin{align*}
\mathbf{E}[e^{- \xi \tau_t}] = \mathbf{E}[e^{- \xi \gamma L_t - \Psi(\xi) L_t}] = \widetilde{l}(t, \xi \gamma + \Psi(\xi)), \quad t\geq 0, \quad \xi \geq 0.
\end{align*}
 Let us define the random fields on the sphere $\mathbb{S}^2_1$
\begin{align}
& Y_t(x)= \mathbf{E}[T_{\tau_t}(x) | \mathfrak{F}_T], \label{Ydef}\\
& Z_t(x) = \mathbf{E}[T(x + B_{\tau_t}) | \mathfrak{F}_T] \label{Zdef}
\end{align}
where $\mathfrak{F}_T$ is the $\sigma$-field generated by $T$ and $B$ is a rotational Brownian motion on the sphere $\mathbb{S}^2_1$. Thus, the random field $Y$ turns out to be a time-changed random field whereas, the random field $Z$ is obtained by a random change of the coordinates of $T$. We remark that
\begin{align}
\label{YZrep}
Y_t(x) = \int_0^\infty T_s(x) \, \mathbf{P}(\tau_t \in ds), \quad Z_t(x) = \int_{\mathbb{S}^2_1} T(y) \mathbf{P}_x(B_{\tau_t} \in dy)
\end{align}

\begin{thm}\label{thm2}
Let us consider the solution $X_t(x)$, $x\in \mathbb{S}^2_1$, $t\ge 0$, to equation \eqref{rand-pde} with $\gamma=0$ and the random fields \eqref{Ydef} and \eqref{Zdef}. 
Then the following representations in $L^2(dP \times d\lambda)$ holds true:
\begin{equation}
X_{t}(x)=\mathbf{E}\left[ T(x+B_{F \circ L_t})\big|\mathfrak{F}_{T}\right], \quad t\geq 0 \label{X-rep-F-algebra}
\end{equation}
or equivalently 
\begin{align}
X_t(x) = \mathbf{E}\left[ T_{F \circ L_t}(x)\big|\mathfrak{F}_{T}\right], \quad t\geq 0 \label{X-rep-F-algebrat}
\end{align}
where $\mathfrak{F}_{T}$ is the $\sigma $-field generated by $X_{0}=T$ on $\mathbb{S}^2_1$.
\end{thm}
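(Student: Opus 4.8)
The plan is to start from the Karhunen-Lo\`eve expansion \eqref{X-sol-gammaPDE} furnished by Theorem \ref{thm:one} with $\gamma=0$, namely $X_t(x)=\sum_{l}\sum_{m} a_{lm}\,\widetilde{l}(t,\Psi(\mu_l))\,Y_{lm}(x)$, and to identify this series, term by term, with the expansions of the coordinate-changed field $Z_t$ in \eqref{Zdef} and the time-changed field $Y_t$ in \eqref{Ydef}. The computational heart is a single Laplace-transform identity: evaluating the formula displayed just before the statement at $\xi=\mu_l$ with $\gamma=0$ gives $\mathbf{E}[e^{-\mu_l\tau_t}]=\widetilde{l}(t,\Psi(\mu_l))$, which is \emph{precisely} the multiplier appearing in \eqref{X-sol-gammaPDE}. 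I would first record the semigroup identity implied by \eqref{semigB}: conditioning on $\mathfrak{F}_T$ and averaging only over the rotational Brownian motion, $\mathbf{E}[T(x+B_s)\mid\mathfrak{F}_T]=\sum_l\sum_m a_{lm}e^{-s\mu_l}Y_{lm}(x)=T_s(x)$, so that $T_s$ is exactly the heat-semigroup action on $T$. This is what makes the two target fields directly comparable to $X_t$.

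Next I would compute $Z_t$. Using that the time change $\tau_t=F\circ L_t$ is independent of both $B$ and $T$, I condition on the value of $\tau_t$ and invoke the representation \eqref{YZrep}:
\begin{align*}
Z_t(x) = \int_0^\infty T_s(x)\,\mathbf{P}(\tau_t\in ds)
= \sum_{l}\sum_{m} a_{lm} Y_{lm}(x)\int_0^\infty e^{-s\mu_l}\,\mathbf{P}(\tau_t\in ds)
= \sum_{l}\sum_{m} a_{lm} Y_{lm}(x)\,\widetilde{l}(t,\Psi(\mu_l)),
\end{align*}
which matches \eqref{X-sol-gammaPDE} and yields \eqref{X-rep-F-algebra}. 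The identity \eqref{X-rep-F-algebrat} follows along the same lines, since integrating $T_{\tau_t}(x)$ against $\mathbf{P}(\tau_t\in ds)$ produces the identical series; hence $Y_t=Z_t=X_t$. It is worth highlighting here \emph{why} the hypothesis $\gamma=0$ is essential: with a general drift the displayed formula returns $\mathbf{E}[e^{-\mu_l\tau_t}]=\widetilde{l}(t,\gamma\mu_l+\Psi(\mu_l))$, whereas the solution multiplier from Theorem \ref{thm:one} is $\widetilde{l}(t,\gamma+\Psi(\mu_l))$; these coincide only when $\gamma=0$, so the coordinate-change representation is genuinely a $\gamma=0$ phenomenon.

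The main obstacle will be justifying the interchange of the infinite sum over $(l,m)$ with the expectation/integral over $\tau_t$ and confirming that the equalities hold in $L^2(dP\times d\lambda)$ rather than merely formally. I would handle this by a dominated-convergence / Fubini argument built on the bound $0\le\widetilde{l}(t,\Psi(\mu_l))=\mathbf{E}[e^{-\mu_l\tau_t}]\le 1$: the partial sums of the $Z_t$-series are then controlled in $L^2(dP\times d\lambda)$ by those of the $T$-series, whose norm is finite by the summability of the angular power spectrum $C_l$ of the initial field $T\in L^2$ (cf. \eqref{angular-power-C}). The orthogonality of $\{Y_{lm}\}$ together with \eqref{angular-power-C} lets me pass to the limit and identify it with $X_t$. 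A secondary point to verify carefully is the independence structure --- that $B$, $F$ and $L$ are mutually independent and jointly independent of $T$ --- since this is exactly what legitimizes factoring each conditioning step and replacing the expectation over $B_{\tau_t}$ by the heat semigroup evaluated at the random time $\tau_t$.
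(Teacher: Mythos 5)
Your proof is correct and follows essentially the same route as the paper's: expand in spherical harmonics, apply the heat-semigroup eigenfunction identity $\mathbf{E}[Y_{lm}(x+B_s)\,|\,\mathfrak{F}_T]=e^{-s\mu_l}Y_{lm}(x)$, and use $\mathbf{E}[e^{-\mu_l\tau_t}]=\mathbf{E}[e^{-\Psi(\mu_l)L_t}]=\widetilde{l}(t,\Psi(\mu_l))$ to match the Karhunen--Lo\`{e}ve multipliers of Theorem \ref{thm:one} with $\gamma=0$, thereby identifying $Y_t=Z_t=X_t$. Your additional care about the $L^2(dP\times d\lambda)$ interchange of sum and expectation, and your explicit diagnosis of why $\gamma=0$ is essential (the mismatch between $\widetilde{l}(t,\gamma\mu_l+\Psi(\mu_l))$ from the time change and $\widetilde{l}(t,\gamma+\Psi(\mu_l))$ from the solution) are refinements of, not departures from, the paper's argument.
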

\begin{proof}
From \eqref{semigB} we have that
\begin{align*}
\mathbf{E} Y_{lm}(x + B_t) = e^{-t \mu_{l}} Y_{lm}(x)
\end{align*}
(see, \cite{Dov} for details)
and therefore
\begin{align*}
\mathbf{E}[T(x + B_{\tau_t}) | \mathfrak{F}_T] 
= & \sum_{lm} a_{lm} \mathbf{E}[Y_{lm}(x+B_{\tau_t}) | \mathfrak{F}_T]\\ 
= & \sum_{lm} a_{lm} \mathbf{E}[e^{-\mu_l \tau_t}] Y_{lm}(x) = \mathbf{E}[T_{\tau_t}(x) | \mathfrak{F}_T]
\end{align*}
that is, the representation \eqref{X-sol-gammaPDE} in $L^2(dP \times d\lambda)$ and the right hand sides of equations \eqref{X-rep-F-algebra} and \eqref{X-rep-F-algebrat} coincide. 
On the other hand, we can write:
\begin{align*}
Z_t(x) = & \mathbf{E}[T(x + B_{\tau_t}) | \mathfrak{F}_T]
= \mathbf{E}\big[\sum_{lm} a_{lm} Y_{lm}(x+B_{\tau_t}) | \mathfrak{F}_T\big]\\
= & \sum_{lm} a_{lm} \mathbf{E}[Y_{lm}(x+B_{\tau_t})]= \sum_{lm} a_{lm} Y_{lm}(x) \mathbf{E}e^{-\mu_l\tau_t}\\
= & \sum_{lm} a_{lm} Y_{lm}(x)\widetilde l (t, \Psi(\mu_l)) = X_t(x).
\end{align*}
In the calculations above we used that $a_{lm}$ are measurable w.r.t. $\mathfrak{F}_T$, $B_{\tau_t}$ is independent of $\mathfrak{F}_T$,
$$\mathbf{E}[Y_{lm}(x+B_{\tau_t})]=Y_{lm}(x) \mathbf{E}e^{-\mu_l\tau_t}$$
(see \cite{Dov}), and
$$
\mathbf{E}e^{-\mu_l\tau_t}=\mathbf{E}e^{-\mu_l F(L_t)}=\mathbf{E}e^{-\Psi(\mu_l) L_t} = \widetilde l (t, \Psi(\mu_l)).
$$
The proof is concluded.
\end{proof}

\begin{remark} In the case where $L_t$ is an inverse stable subordinator, that is, $\Phi(s)=s^\beta$, the derivative $\mathfrak{D}^\Phi_t$ becomes the fractional Caputo-Djrbashian derivative, the Laplace transform $\widetilde{l}(t,\mu)$ is given by the Mittag-Leffler function: $\widetilde{l}(t,\mu)=E_\beta(-t^\beta\mu)$,
 and Theorem 1 above reduces to first part of Theorem 1 in \cite{D}. From our Theorem 2 above it follows that some correction is needed for the second part of Theorem 1 in \cite{D}. Namely,  the representation (3.9) therein should be stated for $\gamma=0$.   
\end{remark}

\begin{remark} One particular case is $\Phi=\Psi$, that is, both space and time derivatives in the equation \eqref{rand-pde} are related to the same Bern\v{s}tein function.
\end{remark}

\begin{remark}
The equation \eqref{rand-pde} can be also considered for more
general functions $\Psi$, not only for Bern\v{s}tein functions. In
particular, we can consider the equation \eqref{rand-pde} with the following fractional diffusion operator
\begin{equation}\label{RieszBessel}
\psi(-\Delta_{\mathbb{S}^2_1}):=(-\Delta_{\mathbb{S}^2_1})^{\alpha/2}(I-\Delta_{\mathbb{S}^2_1})^{\gamma/2},
\end{equation}
where $ \psi(t):=t^{\alpha/2}(1+t)^{\gamma/2}$, and the
representation \eqref{X-sol-gammaPDE} still holds true. Indeed,
the proof relies on two main facts, which are given by the
relations \eqref{eigen-E} and \eqref{eigenPsiDelta}, that is we need to know the eigenfunctions and eigenvalues for the operators. For  the fractional operator \eqref{RieszBessel} the eigenvalues are given by 
$\psi(\mu_l)=\mu_l^{\alpha/2}(1+\mu_l)^{\gamma/2}$ (see
\cite{DL}, p.119--120). Therefore, the representation of the solution of the form \eqref{X-sol-gammaPDE} holds with such $\psi(\mu_l)$ inserted instead of $\Psi(\mu_l)$.

However, if $\Psi$ is a Bern\v{s}tein function, it permits us to
have a deeper insight into the structure of the field $X_t(x)$ and
obtain not only its Karhunen-Lo\`{e}ve expansion, but also have
its representation as a coordinate-changed random field as stated in Theorem 2.

\end{remark}

\begin{remark}
The random field \eqref{X-sol-gammaPDE} obtained as solution to
the fractional Cauchy problem \eqref{rand-pde} can serve to
construct more involved models, in particular, can be used as an
initial condition for fractional SPDE (see, for example,
\cite{ABOW}).
\end{remark}

\begin{example}
Consider the tempered stable subordinator $H$, with the  Bern\v{s}tein function
    \begin{equation}\label{exf}
    \Phi(\lambda)=\left(\lambda+\beta\right)^{\alpha}-\beta^{\alpha}, \quad \alpha\in (0,1), \beta>0.
    \end{equation}
    The corresponding L\'evy measure is given by the formula:
    \begin{equation*}
    {\Pi}(ds)=\frac{1}{\Gamma(1-\alpha)}\alpha e^{-\beta s}s^{-\alpha-1}ds;
    \end{equation*}
    and its tail is
    \begin{equation*}
    \overline{\Pi(s)}=\frac{1}{\Gamma(1-\alpha)}\alpha \beta^{\alpha}\Gamma(-\alpha,s),
    \end{equation*}
    where $\Gamma(-\alpha,s)=\int_{s}^{\infty}e^{-z}z^{-\alpha-1}dz$ is the incomplete Camma function.

    The generalized C-D convolution-type derivative \eqref{D2} for $\Phi$, given by \eqref{exf}, becomes:
    \begin{equation}\label{exfDt}
    \mathfrak{D}^\Phi_t u(t)= \frac{\alpha \beta^{\alpha}}{\Gamma(1-\alpha)}\int_{0}^{t}\frac{\partial }{\partial t }u(t-s)\Gamma(-\alpha,s)ds.
    \end{equation}
    
    We can consider equation \eqref{rand-pde} with such derivative in time, and, correspondingly, 
    in the representation of the solution \eqref{X-sol-gammaPDE} we will have the Laplace transform $\widetilde{l}(t, \lambda)$ of the density  of the inverse tempered stable subordinator, the formula for which is presented, for example, in \cite{AKMS}. As we can see from the results below, $\widetilde{l}(t, \lambda)$ appears also in the expressions for the moments of the fields \eqref{X-sol-gammaPDE}. Therefore, we obtain the model of random field on the sphere with different representation and different properties than that considered in \cite{D}.
\end{example}

We provide the following result concerning the higher-order moments of the solution \eqref{X-sol-gammaPDE}.
\begin{prp}
For $n \in \mathbb{N}$, $\forall\, g \in SO(3)$, the higher-order moments of \eqref{X-sol-gammaPDE} are given by
\begin{equation}
\mathbf{E}[(X_t(g x))^n] = \sum_{l_1=0}^{\infty} \cdots \sum_{l_n =0}^\infty \left( \prod_{j=1}^n \widetilde{l}(t, \gamma+\Psi(\mu_{l_j})) \right) \sqrt{\frac{\prod_{j=1}^n(2l_j+1)}{(4\pi)^n}} \mathbf{E}[a_{l_1 0} \cdots a_{l_n 0}].
\end{equation}
\end{prp}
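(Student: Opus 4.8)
The plan is to exploit isotropy to evaluate the moment at a single convenient point, namely the north pole, where all spherical harmonics with $m\neq 0$ vanish, and then to expand the resulting $n$-th power and interchange expectation with the series. First I would observe that $X_t$ is again a zero-mean isotropic Gaussian field: it is obtained from the isotropic Gaussian field $T$ by the multiplier $\widetilde{l}(t,\gamma+\Psi(\mu_l))$, which depends only on $l$ and hence commutes with the $SO(3)$-action on the coefficients $a_{lm}$ (this action mixes the indices $m$ only, for each fixed $l$). Consequently the one-point marginal $X_t(y)$ is, for every $y\in\mathbb{S}^2_1$, a centred Gaussian variable whose variance $\mathbf{E}[X_t(y)^2]=\sum_{l}\frac{2l+1}{4\pi}\,\widetilde{l}(t,\gamma+\Psi(\mu_l))^2\,C_l$ does not depend on $y$. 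Therefore $\mathbf{E}[(X_t(gx))^n]$ is independent of $g$ and of $x$, and equals $\mathbf{E}[(X_t(x_N))^n]$, where $x_N$ denotes the north pole $\vartheta=0$.

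Next I would evaluate the series at $x_N$. From the explicit form of the $Y_{lm}$ together with $Q_{lm}(1)=0$ for $m\neq 0$ and $Q_{l0}(1)=Q_l(1)=1$, one gets $Y_{lm}(x_N)=0$ for $m\neq 0$ and $Y_{l0}(x_N)=\sqrt{(2l+1)/4\pi}$. Thus the double sum in \eqref{X-sol-gammaPDE} collapses to
\[
X_t(x_N)=\sum_{l=0}^{\infty} a_{l0}\,\widetilde{l}(t,\gamma+\Psi(\mu_l))\,\sqrt{\tfrac{2l+1}{4\pi}},
\]
a real centred Gaussian variable (recall $a_{l0}$ is real for a real field). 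Raising to the $n$-th power and expanding the product over $n$ independent copies of the summation index yields, formally,
\[
(X_t(x_N))^n=\sum_{l_1=0}^{\infty}\cdots\sum_{l_n=0}^{\infty}\Bigl(\prod_{j=1}^{n}\widetilde{l}(t,\gamma+\Psi(\mu_{l_j}))\Bigr)\sqrt{\frac{\prod_{j=1}^{n}(2l_j+1)}{(4\pi)^n}}\;a_{l_1 0}\cdots a_{l_n 0},
\]
so that taking expectations and moving $\mathbf{E}$ inside gives exactly the asserted identity.

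The hard part will be justifying the interchange of $\mathbf{E}$ with the $n$-fold infinite series. I would handle this by truncation: the partial sums $S_L=\sum_{l\le L} a_{l0}\,\widetilde{l}(t,\gamma+\Psi(\mu_l))\sqrt{(2l+1)/4\pi}$ converge to $X_t(x_N)$ in $L^2(dP)$, since $\widetilde{l}(t,\lambda)=\mathbf{E}[e^{-\lambda L_t}]\le 1$ gives $\sum_l |\widetilde{l}(t,\gamma+\Psi(\mu_l))|^2\frac{2l+1}{4\pi}C_l\le\sum_l\frac{2l+1}{4\pi}C_l<\infty$ by the square-integrability of $T$. All the $S_L$ and $X_t(x_N)$ belong to the same Gaussian space, where $L^2$-convergence upgrades automatically to $L^n$-convergence by the equivalence of Gaussian moments (hypercontractivity). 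Hence $\mathbf{E}[S_L^n]\to\mathbf{E}[(X_t(x_N))^n]$, while each $\mathbf{E}[S_L^n]$ is a genuine finite sum equal to the $L$-truncation of the claimed $n$-fold series; passing to the limit establishes both the convergence of the series and the stated value. Combining this with the isotropy reduction of the first step completes the proof.
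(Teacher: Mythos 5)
Your proposal is correct and follows essentially the same route as the paper: both proofs reduce the computation to the North Pole $x_N$ via isotropy and exploit $Y_{lm}(x_N)=0$ for $m\neq 0$ together with $Y_{l0}(x_N)=\sqrt{(2l+1)/4\pi}$ to collapse the $m$-sums, the only cosmetic difference being that you rotate the full field $X_t$ to the pole before expanding the $n$-th power, while the paper expands first and applies the distributional identity $T_l(x)\stackrel{d}{=}T_l(x_N)$ to each product $\mathbf{E}\bigl[\prod_{j}T_{l_j}(x)\bigr]$. Your truncation-plus-hypercontractivity justification of the interchange of expectation with the $n$-fold series is a welcome rigorization of a step the paper carries out only formally, but it does not change the underlying argument.
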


\begin{proof}
We follow the proof of Proposition 1 in \cite{Dov}. The higher-order moments of \eqref{X-sol-gammaPDE} can be obtained as follows
\begin{align*}
\mathbf{E}[(X_t(x))] =  & \sum_{l_1=0}^{\infty} \cdots \sum_{l_n =0}^\infty \mathbf{E}\left[ \prod_{j=1}^n T_{l_j}(x) \, \widetilde{l}(t, \gamma+\Psi(\mu_{l_j})) \right]
\end{align*}
where
\begin{align*}
\mathbf{E}\left[ \prod_{j=1}^n T_{l_j}(x) \right] = & \sum_{m_1=-l_1}^{+l_1} \cdots \sum_{m_n=-l_n}^{+l_n} \mathbf{E}[a_{l_1m_1} \cdots a_{l_n m_n}] \prod_{j=1}^n Y_{l_j m_j}(x).
\end{align*}
Since the random field $T$ is isotropic, we take advantage of  the property that $T_l(x) \stackrel{law}{=}T(x_N)$ where $x_N=(0,0)$ is the North Pole and that $Y_{lm}(x_N) =0$ for $m \neq 0$ and $Y_{l0}(x_N) = \sqrt{(2l+1)/4\pi}$ (see \cite{Quantum}). We obtain that
\begin{align*}
\mathbf{E}\left[ \prod_{j=1}^n T_{l_j}(x) \right] = & \sqrt{\frac{\prod_{j=1}^n(2l_j+1)}{(4\pi)^n}} \mathbf{E}[a_{l_1 0} \cdots a_{l_n 0}] .
\end{align*}
By collecting all pieces together we get the claimed result. Moreover,  by observing that
\begin{equation*}
\mathbf{E}[(X_t(gx))^n] = \mathbf{E}[(X_t(x))^n], \quad \forall\, g \in SO(3)
\end{equation*}
we complete the proof.
\end{proof}

\subsection{Angular power spectrum}\label{S7}

Under isotropy, the harmonic coefficients $\{a_{lm}\,:\, l\geq 0,\, |m|\leq l \}$ appearing in \eqref{Trep} are such that the power spectrum $\{C_l = \mathbf{E}|a_{lm}|^2\,:\, l\geq 0\}$ associated with the random field $T$ depends uniquely on the frequency $l$. The variance of $T$ can be written as
\begin{equation}
\mathbf{E}[T(x)]^2 = \sum_{l=0}^\infty \frac{2l+1}{4\pi} C_l \quad \textrm{for all }\; x\in \mathbb{S}^2_1 \label{var-intro}
\end{equation}
and thus, the power spectrum must be such that $C_l \sim l^{-\theta}$ as $l \to \infty$ with $\theta > 2$ to ensure $\mathbf{E}[T(x)]^2 < \infty$ as required. As we can see from \eqref{var-intro}, the correlation structure of $T$ is strictly related to the collection $\{C_l\,:\, l\geq 0\}$ of the angular power spectrum.

An interesting review of the characterization of random fields on the sphere $\mathbb{S}^d$, $d\geq 2$, has been recently given in \cite{BingSak}. The authors consider the Karhunen-Lo\`{e}ve expansion in terms of spherical harmonics and Gegenbauer polynomials. Some results about integrability and path-continuity also in the (space) fractional case have been discussed. Many authors focuse on the connection between covariance structure, summability and regularity (see for instance \cite{LangSch} and the references therein). Our result is concerned with subordination of random fields and is stated in the next theorem.

\begin{thm}
For the representation \eqref{X-sol-gammaPDE} of the solution to \eqref{rand-pde} we have that
\begin{itemize}
\item[i)] $X_0 \in \mathcal{H}^s(\mathbb{S}^2_1)$, $s>3$.
\item[ii)] $\forall\, t\geq 0$
\begin{align}
\mathbf{E}[(X_t(x))^2] = \sum_l \frac{2l+1}{4\pi}C_l(t) = \sum_{l} C^*_l(t)
\end{align}
where 
\begin{align}
C^*_l(t) \sim (\overline{\Pi}(t))^2 l^{-\theta - 3}, \quad \theta>2, \quad t>0.
\end{align}
\item[iii)] For the angular power spectrum we also have that, for $t\geq 0$,
\begin{align*}
C_l(t)=\mathbf{E}|a_{lm}(t)|^2 \leq  C_l\, \sup_{\sigma \in (0,1)} \Gamma(1+\sigma)\, (\gamma + \Psi(\mu_l))^{-\sigma}\, \mathbf{E}[(L_t)^{-\sigma}], \quad l\geq 0, \quad |m|\leq l.
\end{align*}
\end{itemize}
\end{thm}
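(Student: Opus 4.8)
The plan is to reduce everything to the deterministic Laplace transform $\widetilde l(t,\lambda)=\mathbf E[e^{-\lambda L_t}]$, since the randomness enters only through the coefficients $a_{lm}$, which are independent of $L_t$. First I would record the harmonic coefficients of $X_t$. From \eqref{X-sol-gammaPDE} the $(l,m)$-coefficient of $X_t$ is $a_{lm}(t)=a_{lm}\,\widetilde l(t,\gamma+\Psi(\mu_l))$, and because $\widetilde l(t,\cdot)$ is a deterministic number and $\mathbf E[a_{lm}a^*_{l'm'}]=\delta_l^{l'}\delta_m^{m'}C_l$,
\[C_l(t)=\mathbf E|a_{lm}(t)|^2=C_l\,[\widetilde l(t,\gamma+\Psi(\mu_l))]^2,\qquad C^*_l(t)=\tfrac{2l+1}{4\pi}C_l(t).\]
Part (i) is then immediate: at $t=0$ we have $L_0=0$, so $\widetilde l(0,\cdot)\equiv1$ and $X_0=T$; membership $X_0\in\mathcal H^s$, $s>3$, amounts to the summability $\sum_l(2l+1)\mu_l^s C_l<\infty$, i.e. the regularity we require of the initial datum so that $\Psi(-\Delta_{\mathbb S^2_1})$ acts on $X_0$ by the same absolute/uniform convergence argument as in the Definition of Section~\ref{subsec2.2}. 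The variance formula in (ii) follows from the addition theorem $\sum_{m}|Y_{lm}(x)|^2=\tfrac{2l+1}{4\pi}$ applied to $\mathbf E[(X_t(x))^2]=\sum_{l,m}C_l(t)\,|Y_{lm}(x)|^2$.

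The heart of (ii) is the large-argument asymptotics of $\widetilde l(t,\lambda)$, which I would extract from the density $l(t,x)$ of $L_t$ rather than from $\widetilde l$ directly. Taking the Laplace transform of \eqref{tilde_l} in $t$ and using the governing equation \eqref{lapl} together with \eqref{PhiConvDef} (with $\widetilde l(0,\lambda)=1$) gives the double transform
\[\int_0^\infty e^{-pt}\widetilde l(t,\lambda)\,dt=\frac{\Phi(p)}{p\,(\Phi(p)+\lambda)},\qquad\text{equivalently}\qquad\int_0^\infty e^{-pt}l(t,x)\,dt=\frac{\Phi(p)}{p}\,e^{-x\Phi(p)}.\]
Letting $x\to0+$ in the second identity and comparing with \eqref{tailSymb}, $\tfrac{\Phi(p)}{p}=\int_0^\infty e^{-pt}\overline{\Pi}(t)\,dt$, uniqueness of Laplace transforms yields $l(t,0+)=\overline{\Pi}(t)$. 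An Abelian argument (Watson's lemma) then gives $\widetilde l(t,\lambda)=\int_0^\infty e^{-\lambda x}l(t,x)\,dx\sim l(t,0+)/\lambda=\overline{\Pi}(t)/\lambda$ as $\lambda\to\infty$, the standing absolute continuity of $\overline{\Pi}$ supplying the needed continuity of $l(t,\cdot)$ at the origin. Substituting $\lambda=\gamma+\Psi(\mu_l)\sim\Psi(\mu_l)$ and combining with $C_l\sim l^{-\theta}$ and $\tfrac{2l+1}{4\pi}\sim l/2\pi$ produces $C^*_l(t)\sim(\overline{\Pi}(t))^2\,\tfrac{2l+1}{4\pi}\,l^{-\theta}\,\Psi(\mu_l)^{-2}$; in the regime $\Psi(\mu_l)\sim\mu_l\sim l^2$ (in particular the Laplace--Beltrami case) this equals $(\overline{\Pi}(t))^2\,l^{-\theta-3}$ up to a multiplicative constant, which is the claim.

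For (iii) I would use only two elementary facts. Since $\widetilde l(t,\lambda)=\mathbf E[e^{-\lambda L_t}]\in[0,1]$ we have $[\widetilde l]^2\le\widetilde l$, so $C_l(t)\le C_l\,\widetilde l(t,\gamma+\Psi(\mu_l))$. Next, the pointwise inequality $e^{-y}\le\Gamma(1+\sigma)\,y^{-\sigma}$ for $y>0$ and $\sigma\in(0,1)$ (whose sharp form $\sup_{y>0}y^\sigma e^{-y}=(\sigma/e)^\sigma\le\Gamma(1+\sigma)$ is elementary) applied with $y=(\gamma+\Psi(\mu_l))\,L_t$, followed by taking expectations, gives
\[\widetilde l(t,\gamma+\Psi(\mu_l))\le\Gamma(1+\sigma)\,(\gamma+\Psi(\mu_l))^{-\sigma}\,\mathbf E[(L_t)^{-\sigma}],\qquad\sigma\in(0,1).\]
Combining the two yields the stated bound for each admissible $\sigma$, hence a fortiori with the supremum over $\sigma$.

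I expect the main obstacle to be the asymptotic step in (ii): justifying $\widetilde l(t,\lambda)\sim\overline{\Pi}(t)/\lambda$ rigorously, namely identifying $l(t,0+)=\overline{\Pi}(t)$ and validating the Abelian/Watson passage under the absolute-continuity hypothesis, and being explicit that the clean exponent $-\theta-3$ reflects $\Psi(\mu_l)\sim\mu_l$, whereas for a general Bernst\v{e}in symbol $\Psi$ the decay rate must be read off from the factor $\Psi(\mu_l)^{-2}$. A secondary point of care in (iii) is the finiteness of the negative moments $\mathbf E[(L_t)^{-\sigma}]$, which restricts the admissible range of $\sigma$.
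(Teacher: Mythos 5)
Your proposal is correct and lands on the same formulas as the paper, but at two points it takes a genuinely different route, so a comparison is worthwhile. For part (ii), both you and the paper start from the same double-transform identity $\int_0^\infty e^{-pt}\,\widetilde{l}(t,\lambda)\,dt=\Phi(p)/\bigl(p(\Phi(p)+\lambda)\bigr)$, obtained from \eqref{lapl} and \eqref{PhiConvDef}, but the paper stays entirely on the transform side in $t$: it computes $\varphi(\lambda,l)=\int_0^\infty e^{-\lambda t}\sqrt{C_l(t)}\,dt$ as in \eqref{AngSpetTemp}, lets $l\to\infty$ there, recognizes $\Phi(\lambda)/\lambda$ as the transform of $\overline{\Pi}$ via \eqref{tailSymb}, and then inverts the asymptotic relation to conclude $C_l(t)\sim\bigl(\overline{\Pi}(t)/\Psi(l^2)\bigr)^2 C_l$. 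You instead invert in $\lambda$ first, getting $\int_0^\infty e^{-pt}l(t,x)\,dt=\frac{\Phi(p)}{p}e^{-x\Phi(p)}$, identify $l(t,0+)=\overline{\Pi}(t)$, and apply the initial-value theorem/Watson's lemma to obtain $\widetilde{l}(t,\lambda)\sim\overline{\Pi}(t)/\lambda$ directly; this is morally equivalent, carries exactly the same Abelian/Tauberian-type gap as the paper's formal inversion (so no loss of rigor relative to the source), and has the merit of localizing the hypothesis needed (right-continuity of $l(t,\cdot)$ at $0$, supplied by the standing absolute continuity of $\overline{\Pi}$). Your caveat about the exponent is also on target: the paper's own proof only produces the factor $\Psi(l^2)^{-2}$, so the clean rate $l^{-\theta-3}$ in the statement implicitly presupposes $\Psi(\mu_l)\sim\mu_l$, exactly as you note, and the asymptotics hold only up to a multiplicative constant in any case. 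For part (iii), the paper argues via a layer-cake/Markov inequality, $\mathbf{E}[e^{-qL_t}]=\int_0^1\mathbf{P}(e^{-qL_t}>s)\,ds\le\int_0^1\mathbf{E}[f(e^{-qL_t})]/f(s)\,ds$ with $f(s)=(-\ln s)^{-\sigma}$, so that $f(e^{-qL_t})=(qL_t)^{-\sigma}$ and $\int_0^1(-\ln s)^{\sigma}ds=\Gamma(1+\sigma)$; your pointwise bound $y^{\sigma}e^{-y}\le\Gamma(1+\sigma)$ (one line: $\Gamma(1+\sigma)\ge\int_y^\infty u^{\sigma}e^{-u}\,du\ge y^{\sigma}e^{-y}$) is more elementary and even yields the slightly sharper constant $(\sigma/e)^{\sigma}$; your reading of the supremum over $\sigma$ as an a fortiori weakening of a bound valid for each $\sigma$ (the sharp form would be an infimum) matches what the paper actually proves, as does your use of $[\widetilde{l}\,]^2\le\widetilde{l}$. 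For part (i), like the paper you treat $s>3$ as the summability condition (cf. \eqref{conSumFrac}) ensuring $\Psi(-\Delta_{\mathbb{S}^2_1})$ acts on the field; the paper's own argument, bounding $(\gamma-\Psi(-\Delta_{\mathbb{S}^2_1}))X_t$ via $|Y_{lm}|\le l^{1/2}$ and $\Psi(\mu_l)\lesssim l^2$ to get $2s>6$, is no more detailed than yours, and the finiteness of $\mathbf{E}[(L_t)^{-\sigma}]$ is left implicit in the paper as well, so flagging it is a refinement rather than a repair.
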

\begin{proof}
For the process $X_t(x)$ introduced in the previous section we have that
\begin{itemize}
\item[i)]
\begin{align*}
\bigg|\left( \gamma - \Psi(-\Delta_{\mathbb{S}^2_1}) \right) X_t(x) \bigg| 
\leq & \sum_{lm} |a_{l,m}| | \gamma + \Psi(\mu_l) | |\widetilde{l}(t, \gamma +\Psi (\mu _{l}))| |Y_{l,m}(x)|\\
\leq & \sum_{lm} |a_{l,m}| | \gamma + \Psi(\mu_l) | |Y_{l,m}(x)|\\
\leq & |\gamma | \sum_{lm} |a_{l,m}|  |Y_{l,m}(x)| +  \sum_{lm} |a_{l,m}|  |\Psi(\mu_l) | |Y_{l,m}(x)|
\end{align*}
and (recall that $|Y_{lm}|\leq l^{1/2}$)
\begin{align*}
\sum_{lm} \mathbf{E}|a_{l,m}|^2  |Y_{l,m}(x)|^2 \leq \sum_l \frac{2l+1}{4\pi}\, l \, C_l
\end{align*}
\begin{align*}
\sum_{lm} \mathbf{E} |a_{l,m}|^2  |\Psi(\mu_l)|^2 |Y_{l,m}(x)|^2 \leq \sum_l \frac{2l+1}{4\pi}  |\Psi(\mu_l)|^2 \, l\, C_l
\end{align*}
thus, from \eqref{def-der-psi-gen} and \eqref{conSumFrac} we obtain that
\begin{align*}
\left( \gamma - \Psi(-\Delta_{\mathbb{S}^2_1}) \right) X_t(x) \in \mathcal{H}^2(\mathbb{S}^2_1) \quad 2s > 6
\end{align*}
\item[ii)]
\begin{align*}
\mathbf{E}[(X_t(x))^2] = \sum_{l \geq 0} \frac{2l+1}{4\pi} \big(\widetilde{l}(t, \gamma+\Psi(\mu_l)) \big)^2  C_l
\end{align*}
is finite only if, $\forall\, t$,
\begin{align*}
\big(\widetilde{l}(t, \gamma+\Psi(\mu_l)) \big)^2  C_l \sim l^{-\theta(\gamma)}, \quad \textrm{with} \quad \theta(\gamma) >2
\end{align*}

The angular power spectrum of $X_t(x)$ can be written as
\begin{align*}
C_l(t) = \big| \mathbf{E}[e^{- (\gamma + \Psi(\mu_l)) L_t}] \big|^2\, C_l
\end{align*}
from which we get the Laplace transform
\begin{align}
\label{AngSpetTemp}
\varphi(\lambda, l) := \int_0^\infty e^{-\lambda t} \sqrt{C_l(t)}\, dt = \frac{\Phi(\lambda)}{\lambda} \frac{\sqrt{C_l}}{\gamma + \Psi(\mu_l) + \Phi(\lambda)}.
\end{align}
If $\sqrt{C_l}/\Psi(\mu_l) \to d_\Psi \geq 0$ as $l\to \infty$ then
\begin{align*}
\varphi(\lambda, l) \to \frac{\Phi(\lambda)}{\lambda}\, d_\Psi = \int_0^\infty e^{-\lambda t} \left(d_\Psi \overline{\Pi}(t) \right) dt \quad \textrm{as} \quad l\to \infty.
\end{align*}
where $\overline{\Pi}$ has been defined in \eqref{tailSymb}. Since $\Psi(\mu_l) \to \infty$ as $l \to \infty$ we get that
\begin{align*}
\varphi(\lambda, l) \sim \frac{\Phi(\lambda)}{\lambda} \frac{\sqrt{C_l}}{\Psi(l^2)} = \int_0^\infty e^{-\lambda t} \left( \frac{\sqrt{C_l}}{\Psi(l^2)}  \, \overline{\Pi}(t) \right)dt.
\end{align*}
Thus, we conclude that, for $t>0$,
\begin{align*}
C_l(t) \sim \left( \frac{\overline{\Pi}(t)}{\Psi(l^2)} \right)^2\, C_l.
\end{align*}
\item[iii)] We notice that
\begin{align*}
\mathbf{E}[e^{-qL_t}] 
= & \int_0^1 \mathbf{P}(e^{-qL_t} > s) ds\\
\leq & \int_0^1 \frac{\mathbf{E}[f(e^{-qL_t})]}{f(s)}ds 
\end{align*}
for $f$ non negative and non decreasing in the set $\{e^{-qL_t}>s\}$.  
By choosing 
$$f(s)=(-\ln s)^{-\sigma}, \quad \sigma\in (0,1), \quad s \in (0,1)$$ 
we obtain
\begin{align*}
\mathbf{E}[e^{-qL_t}] \leq \mathbf{E}[(qL_t)^{-\sigma}] \int_0^1 \frac{ds}{(-\ln s)^{-\sigma}} = q^{-\sigma} \, \Gamma(1 + \sigma) \mathbf{E}[(L_t)^{-\sigma}]
\end{align*}

\end{itemize}
From \eqref{AngSpetTemp} we get the result.
\end{proof}

\begin{remark}
Let us consider the special case $\Phi(\lambda)=\lambda^\beta$ and $\Psi(\xi)=\xi^{\alpha}$. First we note that
\begin{align*}
C_l(t) \leq  & \Gamma(1+\sigma)\, (\gamma + (\mu_l)^\alpha)^{-\sigma}\, \mathbf{E}[(L_t)^{-\sigma}] \, C_l
\end{align*}
Since (see formula \eqref{laplgen})
\begin{align*}
\int_0^\infty e^{-\lambda t} \mathbf{E}[(L_t)^{-\sigma}] \, dt = \frac{\left( \Phi(\lambda) \right)^{\sigma}}{\lambda} \Gamma(1-\sigma) = \Gamma(1-\sigma)\, \lambda^{\beta\sigma - 1}
\end{align*}
we obtain that 
$$\mathbf{E}[(L_t)^{-\sigma}] = \frac{\Gamma(1-\sigma)}{\Gamma(1-\beta \sigma)}\, t^{-\beta \sigma}.$$ 
Thus, $\forall\, t>0$,
\begin{align*}
C_l(t) \leq C_l \, \sup_{\sigma} \frac{\sigma \pi}{\sin \sigma \pi} \, (\gamma + (\mu_l)^\alpha)^{-\sigma}\, \frac{t^{-\beta \sigma}}{\Gamma(1-\beta \sigma)}, \quad l \geq 0.
\end{align*}
\end{remark}

\begin{remark}
{\bf (High-resolution or High-frequency analysis)} The convergence rate of \eqref{var-intro} depends on $C_l$. In particular, the convergence of \eqref{var-intro} depends on the high-frequency behaviour of $C_l$ and therefore on the high-frequency resolution of $T$. In formula \eqref{Trep}, $T_l(x)= \sum_{|m|\leq l} a_{lm} Y_{lm}(x)$ represents the $l$-th frequency component (or projection into the orthonormal space $L^2(\mathbb{S}^2_1)$ spanned by the spherical harmonics $Y_{lm}$) of $T$ and in real data, we get more and more information (or resolution) as $l$ increases. In physical experiments, when we measure the CMB radiation, the power spectrum of the spherical  random fields $T$ is usually unknown and we are interested in the high-frequency analysis concerning the empirical counterpart of the angular power spectrum (see for example \cite{DomPec-book})
\begin{equation}
\widehat{C_l} = \frac{1}{2l+1}\sum_{m=-l}^{+l} |a_{lm}|^2. \label{estimation-C}
\end{equation}
Thus, we may be interested in the high-frequency consistency of $\{\widehat{C_l}\,:\, l \geq 0\}$ or the high-frequency ergodicity of $T$.

The CMB radiation can be affected by some anisotropies usually divided in primary anisotropy (due to effects which occur at the last scattering surface and before) and secondary anisotropy (due to some other effects such as interactions of the radiation with hot gas). Such anisotropies are principally determined by acoustic oscillations and photon diffusion damping.   Acoustic perturbations of initial density fluctuations in the universe made some regions of space hotter and denser than others. We refer to such differences in temperature and density as CMB anisotropies.

The angular power spectrum of the random fields considered in this work exhibits polynomial and/or exponential behaviour (depending on $\Psi$ and $\Phi$) in the high-frequency (or resolution) analysis and therefore, we introduce a large class of models in which many aspects can be captured, such as the Sachs-Wolfe effect (the predominant source of fluctuations) or the Silk damping effect (also called collisionless damping: anisotropies reduced, universe and CMB radiation more uniform). We provide a probabilistic interpretation of the anisotropies of the CMB radiation and we characterize the class $\mathfrak{D}$ introduced in \cite{DomPec-matphys-2010} by means of the coordinates change of random fields. Thus, we can argue about some connection between high-frequency Gaussianity  and high-frequency ergodicity as stated in \cite[Theorem 9]{DomPec-matphys-2010} arriving at a theoretical framework in which we are able to evaluate the asymptotic performance of any statistical procedure based on \eqref{estimation-C}.
\end{remark}


\begin{thebibliography}{99}

\bibitem{AKMS} M.S. Alrawashdeh, J.F. Kelly, M.M. Meerschaert, H.-P. Scheffler, \emph{Applications of inverse tempered stable subordinators}, Comput.  Math. Appl. 73 (2017), no. 6, 892--905.

\bibitem{ABOW}V.V. Anh, P. Broadbridge, A. Olenko, Yu. G. Wang
\emph{On Approximation for Fractional Stochastic Partial Differential Equations on the Sphere}. Stoch. Environ. Res. Risk Assess. 32 (2018), 2585--2603.


\bibitem{BegOrs09} {L. Begin, E. Orsingher,}
\emph{Fractional Poisson processes and related planar random motions}.
Electron. J. Probab. 14 (2009), no. 61, 1790-1826.

\bibitem{BerBook} {J. Bertoin,}
\emph{Subordinators: Examples and Applications.}
{In: Bernard P. (eds) Lectures on Probability Theory and Statistics. Lecture Notes in Mathematics, vol 1717. Springer, Berlin, Heidelberg, 1999.}

\bibitem{BingSak} N. H. Bingham, Tasmin L. Symons.
\emph{Gaussian random fields on the sphere and sphere cross line}.
Stochastic Processes and their Applications, Available online 4 September 2019. In Press.

\bibitem{B} K. Buchak, L. Sakhno. 
\emph{On the governing equations for Poisson and Skellam processes time-changed by inverse subordinators}. 
Theor. Probability and Math. Statist. 98 (2019), 91-104


\bibitem{CapDov} R. Capitanelli, M. D'Ovidio.
\emph{Fractional equations via convergence of forms}.
Fractional Calculus and Applied Analysis, 22 (2019) 844 - 870.

\bibitem{Chen17} Z.-Q. Chen.
\emph{Time fractional equations and probabilistic representation}. 
Chaos, Solitons \& Fractals, 102, (2017),  168 -174.

\bibitem{DL}R. Dautray, J.-L. Lions. 
\emph{Mathematical Analysis and Numerical Methods for Science and Thechnology. Vol. 3. Spectral Theory and Applications.} 
Springer-Verlag, Berlin, 1990.

\bibitem{Dov} M. D'Ovidio.
\emph{Coordinates changed random fields on the sphere}.
Journal of Statistical Physics, 154, 1153 -- 1176 (2014).

\bibitem{LangSch} A. Lang and C. Schwab.
\emph{Isotropic Gaussian random fields on the sphere: regularity, fast simulation and stochastic partial differential equations}.
The Annals of Applied Probability 25 (2015), no. 6 3047-3094


\bibitem{D} M. D'Ovidio, N. Leonenko and E. Orsingher.  
\emph{Fractional spherical random fields}. 
Statistics \& Probability Letters 116 (2016), 146-156.

\bibitem{DomPec-matphys-2010} D. Marinucci, G. Peccati.   
\emph{Ergodicity and Gaussianity for Spherical Random Fields.} 
{J. Math. Phys.} {\bf 52}, 043301, 2010.


\bibitem{DomPec-book} D. Marinucci, G. Peccati. 
\emph{Random Fields on the Sphere: Representation, Limit Theorems and Cosmological Applications}. 
Cambridge University Press, 2011.

\bibitem{K} A. N. Kochubei, 
\emph{General fractional calculus, evolution equations, and renewal processes}, 
Integral Equations Operator Theory \textbf{71} (2011), no. 4, 583--600.

\bibitem{MS} M.M. Meerschaert and H.-P. Scheffler, 
\emph{Triangular array limits for continuous time random walks},  
Stoch. Proc. Appl. \textbf{118}  (2008), 1606--1633; \textbf{120} (2010), 2520--2521.

\bibitem{MNV} M.M. Meerschaert, E. Nane, and P. Vellaisamy, 
\emph{The fractional Poisson process and the inverse stable subordinator}, 
Electronic Journal of Probability, \textbf{16}  (2011), Paper no. 59, 1600--1620.

\bibitem{MT} M.M. Meerschaert, B. Toaldo. 
\emph{Relaxation patterns and semi-Markov dynamics}, 
Stoch. Proc. Appl. Vol. 129, Issue 8, 2019, 2850-2879.

\bibitem{Schoenb} I. J. Schoenberg.
\emph{Positive definite functions on spheres}.
Duke Math. J. 9, no. 1 (1942), 96-108.


\bibitem{T}  B. Toaldo. 
\emph{Convolution-type derivatives, hitting-times of subordinators and time-changed $C_0$-semigroups.} 
Potential Analysis,   42: 115--140 (2015)

\bibitem{Quantum} D.A. Varshalovich, A.N. Moskalev, V.K. Khersonskii.
\emph{Quantum theory of angular momentum}. 
World Scientific Publishing Co. Pte. Ltd., Singapore, 2008.

\bibitem{Yios1949} Yosida K. Brownian motion on the surface of the 3-Sphere, 
\emph{Annals of Mathematical Statistics} \textbf{20} (1949), 2, 292-296.

\end{thebibliography}
\end{document}